\newtheorem{thm}{Theorem}[section]
\newtheorem*{fixed point criterion}{\fixed point criterion}
\newtheorem{cor}[thm]{Corollary}
\newtheorem{lem}[thm]{Lemma}
\newtheorem{prop}[thm]{Proposition}
\theoremstyle{definition}
\newtheorem{defn}[thm]{Definition}
\newtheorem{ques}[thm]{Question}
\theoremstyle{remark}
\numberwithin{equation}{section}
\newcommand{\Z}{\mathbb Z}
\newcommand{\Q}{\mathbb Q}
\newcommand{\N}{\mathbb N}
\newcommand{\rk}{\mathrm{rk}}
\newcommand*\bigcdot{\mathpalette\bigcdot@{1.5}}
\newcommand*\bigcdot@[2]{\mathbin{\vcenter{\hbox{\scalebox{#2}{$\m@th#1\bullet$}}}}}
\begin{document}

\title{Howson groups which are not strongly Howson}

\author{Qiang Zhang}
\address{School of Mathematics and Statistics, Xi'an Jiaotong University, Xi'an 710049, CHINA}
\email{zhangq.math@mail.xjtu.edu.cn\\ORCID: 0000-0001-6332-5476}

\author{Dongxiao Zhao}
	\address{School of Mathematics and Statistics, Xi'an Jiaotong University, Xi'an 710049, CHINA}
	\email{zdxmath@stu.xjtu.edu.cn\\ORCID: 0009-0006-7919-9244}

\thanks{The first author is partially supported by NSFC (No. 12471066) and the Shaanxi Fundamental Science Research Project for Mathematics and Physics (No. 23JSY027).}

\subjclass[2010]{20F65, 20F34.}

\keywords{Howson property, strongly Howson, intersection of subgroups, rank, free group.}

\date{\today}

\begin{abstract}
A group $G$ is called a Howson group if the intersection $H\cap K$ of any two finitely generated subgroups $H, K<G$ is again finitely generated, and called a strongly Howson group when a uniform bound for the rank of $H\cap K$ can be obtained from the ranks of $H$ and $K$. Clearly, every strongly Howson group is a Howson group, but it is unclear in the literature whether the converse is true.  In this note, we show that the converse is not true by constructing the first Howson groups which are not strongly Howson.
\end{abstract}
\maketitle

\section{Introduction}

In 1954,  Howson \cite{How54} showed that the intersection of finitely generated subgroups of a free group is also finitely generated, which led to the notion of Howson group: a group $G$ is called a \textit{Howson group}, if the intersection $H\cap K$ of any two finitely generated subgroups $H, K<G$ is again finitely generated. (Very recently, another similar notion was introduced: if in addition, one of the two subgroups $H, K$ is normal in $G$, then the group $G$ is called \textit{weakly Howson}, see \cite{LMZ24}.) Many types of groups are Howson groups, for instance, free groups, surface groups, Baumslag-Solitar groups $BS_{1,n}=\langle a,t \mid tat^{-1}=a^n \rangle$ \cite{Mol68}, limit groups  \cite{Dah03}, etc. In particular, if $G$ is a free or surface group, then
$$\rk(H\cap K)-1\leq (\rk(H)-1)(\rk(K)-1),$$
which was conjectured by Hanna Neumann in 1957, and proved independently by Friedman \cite{Fri14} and by Mineyev \cite{Min12} for free groups in 2011, and by Antol\'{i}n and Jaikin-Zapirain \cite{AJZ22} for surface groups in 2022. Note that a free-by-(infinite cyclic group) $F_n\rtimes \Z$ with rank $n\geq 2$ is never a Howson group \cite{BB79}. Moreover, Kapovich \cite{Kap97} showed many hyperbolic groups are not Howson groups.

It is easy to show that the class of Howson groups is closed under taking subgroups and under finite extension. So in particular, virtually free groups and virtually surface groups both are Howson groups. More generally, the class of Howson groups is closed under graphs of groups, where the edge groups are finite \cite{Sky05}.
Moreover, Shusterman and Zalesskii \cite{SZ20} extended the Howson property to Demushkin groups: the intersection of a pair of closed topologically finitely generated subgroups of a Demushkin group is again topologically finitely generated. This is the first example of the Howson pro-$p$ groups that are not free.

In 2015, Ara\'ujo, Silva and Sykiotis \cite{Sky15} introduced the notion of strongly Howson: a group $G$ is called a \textit{strongly Howson} group if there exists a uniform bound for the rank of $H\cap K$ depending on the ranks of $H$ and $K$. More precisely,

\begin{defn}
  A group $G$ is \textit{strongly Howson} if
$$\xi_G(h, k):=\sup \{\rk(H \cap K) \mid H, K < G, \rk(H) \leq h, \rk(K) \leq k\}<\infty$$
for all $h,k \in \mathbb{N}$.
\end{defn}

Moreover, they showed an explicit bound for $\xi_G(h, k)$ as follows.

\begin{thm}(\cite{Sky15}, Theorem 3.2)\label{thm KP}
Let $G$ be a finite extension of a strongly Howson group $F$ and let $m=[G:F]$. Then $G$ is strongly Howson and
$$\xi_G(h,k) \leq \xi_F(m(h-1)+1,m(k-1)+1)+m-1$$
for all $h,k\geq 1$.
\end{thm}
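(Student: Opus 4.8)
The plan is to reduce the intersection problem in $G$ to one inside $F$ by intersecting the given subgroups with $F$, and to control the rank changes in both directions using Schreier-type index estimates for finite-index subgroups. Fix finitely generated $H,K<G$ with $\rk(H)\le h$ and $\rk(K)\le k$; we may assume both are nontrivial, since otherwise $H\cap K$ is trivial and the bound holds vacuously. Because $F$ has index $m$ in $G$, the map sending a coset $x(H\cap F)$ to $xF$ embeds the coset space of $H\cap F$ in $H$ into that of $F$ in $G$, so $[H:H\cap F]\le m$; likewise $[K:K\cap F]\le m$ and $[(H\cap K):(H\cap K)\cap F]\le m$.

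First I would bound the ranks of $H\cap F$ and $K\cap F$ from above via the Nielsen--Schreier index formula, which holds as an upper bound for an arbitrary group. Indeed, if $U<V$ has finite index $d$, write $V=\langle X\rangle$ with $|X|=\rk(V)$ and let $\pi\colon F(X)\To V$ be the canonical surjection; then $\pi^{-1}(U)$ has index $d$ in the free group $F(X)$, hence by Nielsen--Schreier is free of rank $1+d(\rk(V)-1)$, and $U=\pi(\pi^{-1}(U))$ needs no more generators, so $\rk(U)\le 1+d(\rk(V)-1)$. Applying this to $H\cap F<H$ with $d=[H:H\cap F]\le m$ gives $\rk(H\cap F)\le m(\rk(H)-1)+1\le m(h-1)+1$, and similarly $\rk(K\cap F)\le m(k-1)+1$.

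Next I would invoke the hypothesis that $F$ is strongly Howson. Since $(H\cap F)\cap(K\cap F)=(H\cap K)\cap F$, the definition of $\xi_F$ together with its evident monotonicity in each argument yields
\[
\rk\big((H\cap K)\cap F\big)\le \xi_F\big(\rk(H\cap F),\rk(K\cap F)\big)\le \xi_F\big(m(h-1)+1,\,m(k-1)+1\big),
\]
which is finite precisely because $F$ is strongly Howson. It then remains to pass back from the finite-index subgroup $(H\cap K)\cap F$ to $H\cap K$ itself, and here I would use the reverse and much easier estimate: if $L<W$ has finite index $d$, then choosing coset representatives $1=t_1,\dots,t_d$ shows $W=\langle L,t_2,\dots,t_d\rangle$, so $\rk(W)\le \rk(L)+(d-1)$. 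With $W=H\cap K$, $L=(H\cap K)\cap F$ and $d\le m$, this gives $\rk(H\cap K)\le \rk\big((H\cap K)\cap F\big)+(m-1)$, and combining with the previous display produces exactly $\xi_F(m(h-1)+1,m(k-1)+1)+m-1$. Taking the supremum over all admissible $H,K$ gives the stated bound on $\xi_G(h,k)$ and, in particular, shows that $G$ is strongly Howson.

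The only genuinely delicate point is the asymmetry between the two index estimates. The forward Schreier bound inflates the rank multiplicatively by the factor $m$, so it must be applied to $H$ and $K$, where the ranks are the given data, whereas the additive bound $\rk(W)\le\rk(L)+(d-1)$ must be reserved for the final step, since applying the multiplicative bound there would yield a far weaker constant. The step I would check most carefully is the \emph{sharp} form of the Nielsen--Schreier estimate for general groups, namely that one really obtains $1+d(\rk(V)-1)$ rather than merely $d\cdot\rk(V)$; it is this sharp form, transported through the free cover $\pi$, that makes the coefficients in the final inequality come out exactly as claimed.
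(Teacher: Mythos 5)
Your proof is correct. Note that the paper itself does not prove this statement --- it is quoted verbatim from Ara\'ujo--Silva--Sykiotis \cite{Sky15} --- so the only in-paper point of comparison is the qualitative version, Lemma \ref{finite extension lem}, whose proof intersects $A$ and $B$ with the finite-index copy of the Howson group, applies the Howson property there, and then observes that $A\cap B$ is a finite extension of that intersection. Your argument is exactly the quantitative refinement of that scheme, and it is the standard one: the two additional ingredients you supply --- the multiplicative Schreier-type bound $\rk(U)\le 1+d(\rk(V)-1)$ for a finite-index subgroup $U<V$ of an arbitrary group, obtained by pulling back through a free cover, and the additive bound $\rk(W)\le\rk(L)+(d-1)$ via coset representatives --- are both correct, and you deploy them in the right order (multiplicative going down into $F$, additive coming back up to $H\cap K$), which is precisely what produces the constants $m(h-1)+1$ and $m-1$ in the stated inequality. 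One small bonus of your argument worth noting: nothing in it uses normality of $F$ in $G$, only that $[G:F]=m$ is finite, so it is in fact slightly more general than the short-exact-sequence formulation the paper uses for the qualitative lemma.
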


Clearly, every strongly Howson group is a Howson group, but it is unclear in the literature whether the converse is true.
In this note, we show that the strongly Howson property is indeed stronger than the Howson property.

\begin{thm}\label{main thm}
For any $n>1$, the following groups are Howson groups, but are not strongly Howson:
$$F_n\times (\bigoplus_{m=2}^{\infty}\Z/m\Z),\quad F_n\times \Q/\Z.$$
\end{thm}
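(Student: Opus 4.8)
The plan is to handle both groups uniformly by writing them as $F_n \times A$, where $A$ is a locally finite abelian group: every element of $\bigoplus_{m=2}^{\infty}\Z/m\Z$ and of $\Q/\Z$ has finite order, and a finitely generated torsion abelian group is finite, so every finitely generated subgroup of $A$ is finite. I would then prove the Howson property and the failure of the strong Howson property separately.

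For the Howson property, let $p_2\colon F_n\times A\to A$ be the projection. Any finitely generated $H=\langle g_1,\dots,g_r\rangle\le F_n\times A$ has $p_2(H)=\langle p_2(g_1),\dots,p_2(g_r)\rangle$ a finitely generated, hence finite, subgroup of $A$. Given two finitely generated subgroups $H,K$, set $B=\langle p_2(H),p_2(K)\rangle$, a finite subgroup of $A$, so that $H,K\le F_n\times B$ and the intersection $H\cap K$ is the same whether computed in $F_n\times B$ or in $F_n\times A$. Since $F_n\times B$ is a finite extension of the free group $F_n$, and the class of Howson groups is closed under finite extension, $F_n\times B$ is Howson; therefore $H\cap K$ is finitely generated, proving that $F_n\times A$ is Howson.

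For the failure of the strong Howson property, the idea is to realize subgroups of growing Nielsen--Schreier rank as intersections of two subgroups of rank $n$, using the torsion of $A$ to encode a homomorphism. For each $m\ge 2$ I would pick $c_m\in A$ of order $m$ (the generator of the $\Z/m\Z$ summand in the first group, or the class of $1/m$ in $\Q/\Z$), let $\phi_m\colon F_n\to\langle c_m\rangle\cong\Z/m\Z$ be the surjection sending one free generator to $c_m$ and the rest to $0$, and form the graph subgroup $H_m=\{(w,\phi_m(w)):w\in F_n\}\cong F_n$ together with the fixed subgroup $K=F_n\times\{0\}$. Both have rank $n$, since $w\mapsto(w,\phi_m(w))$ is injective. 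Then $H_m\cap K=\ker\phi_m\times\{0\}$, and because $\ker\phi_m$ has index $m$ in $F_n$, the Schreier index formula gives $\rk(H_m\cap K)=1+m(n-1)$, which tends to infinity with $m$ precisely because $n>1$. Hence $\xi_G(n,n)=\infty$ and $G$ is not strongly Howson.

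The main point, more a key idea than a technical obstacle, is this last construction: recognizing that the torsion subgroup of $A$, while contributing nothing to ranks on its own, lets one \emph{see} a surjection $F_n\to\Z/m\Z$ as the discrepancy between the graph subgroup and the free slice, so that its finite-index kernel, whose rank grows linearly in $m$, appears as an intersection of two subgroups of bounded rank. The Howson half is then essentially formal, resting only on the local finiteness of $A$ and the known closure of the Howson class under finite extensions.
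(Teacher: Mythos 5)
Your proposal is correct and follows essentially the same route as the paper: the Howson half is the paper's argument verbatim (finitely generated subgroups project into a finite subgroup of the locally finite factor, reducing to a finite extension of $F_n$), and the non-strongly-Howson half is the graph-subgroup construction the paper uses with $F_2\times\Z/m\Z$ before Proposition 2.5. The only difference is that the paper's Proposition 2.5 refines the construction (via the abelianization of $F_2$) to show $\xi_G(h,k)=\infty$ for \emph{all} $h,k>1$, whereas your version establishes $\xi_G(n,n)=\infty$, which already suffices for the theorem.
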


Throughout this note, let $F_n$ denote the free group of rank $n$, and let $\rk(G)$ denote the rank (i.e., the minimal number of generators) of $G$. All the abelian groups $\Z, \Q, \Z/m\Z$ and $\Q/\Z$ are viewed as additive groups.

\section{Proof of Theorem \ref{main thm}}

To prove the main theorem and make our note as self-contained as possible, we review some basic results first.

\begin{lem}[Schreier's formula]
Let $H$ be a subgroup of the free group $F_n$ with index $[F_n : H]=\ell$. Then $H$ is free with rank
$$\rk(H)=\ell (n-1)+1.$$
\end{lem}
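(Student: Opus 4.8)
The plan is to argue topologically, realizing free groups as fundamental groups of graphs and reading off the rank from the Euler characteristic, which behaves multiplicatively under finite covers. First I would model $F_n$ as $\pi_1(X)$, where $X$ is the wedge of $n$ circles, i.e.\ the graph with a single vertex and $n$ loop-edges; here freeness of subgroups (the Nielsen--Schreier theorem) is automatic, since any connected covering space of a graph is again a graph, and the fundamental group of a connected graph is free.

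Next I would pass to covering spaces. By the Galois correspondence between conjugacy classes of subgroups of $\pi_1(X)$ and connected covers of $X$, the index-$\ell$ subgroup $H$ corresponds to a connected $\ell$-sheeted cover $p\colon \tilde X \to X$ with $\pi_1(\tilde X)\cong H$. Because $p$ restricts to an $\ell$-to-one map on vertices and on edges, the graph $\tilde X$ has $\V(\tilde X)=\ell$ vertices and $\E(\tilde X)=\ell n$ edges. Then I would invoke the basic rank formula for a connected graph $\Gamma$: choosing a spanning tree exhibits $\pi_1(\Gamma)$ as free of rank $\E(\Gamma)-\V(\Gamma)+1$, namely the number of edges outside the tree. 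Applied to $\tilde X$ this gives
$$\rk(H)=\E(\tilde X)-\V(\tilde X)+1=\ell n-\ell+1=\ell(n-1)+1,$$
which is the desired formula.

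The only real work is assembling two standard inputs precisely: the correspondence between index-$\ell$ subgroups and $\ell$-sheeted connected covers, and the rank formula $\rk(\pi_1(\Gamma))=1-\chi(\Gamma)$ for a connected graph. Neither is difficult, but if one prefers a purely combinatorial proof avoiding topology, the same count is available from the Reidemeister--Schreier method: fixing a Schreier transversal $T$ for $H$ in $F_n$, the group $H$ is freely generated by the nontrivial elements among $\{tx\,\overline{tx}^{-1} : t\in T,\ x \text{ a free generator of } F_n\}$; there are $\ell n$ such pairs, of which exactly $\ell-1$ reduce to the identity (one for each edge of a spanning tree of the coset graph), leaving $\ell n-(\ell-1)=\ell(n-1)+1$ free generators. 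I would present whichever version meshes best with the graph-theoretic notation $\V,\E$ already fixed in the preamble.
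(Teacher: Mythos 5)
Your proof is correct. The paper states Schreier's formula without proof (it is quoted as a classical fact used later only for rank counts), so there is no in-paper argument to compare against; both of your routes --- the covering-space computation $\rk(H)=\E(\tilde X)-\V(\tilde X)+1=\ell n-\ell+1$ for the $\ell$-sheeted cover of the wedge of $n$ circles, and the Reidemeister--Schreier count of $\ell n$ generators of which $\ell-1$ collapse along a spanning tree of the coset graph --- are standard and complete. The only point worth tightening is the phrasing of the Galois correspondence: it is \emph{pointed} connected covers that correspond to subgroups (unpointed ones correspond to conjugacy classes), but since conjugate subgroups are isomorphic and only the rank is at stake, this does not affect the argument.
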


\begin{lem}\label{finite extension lem}
The class of Howson groups is closed under taking subgroups and under finite extension.
\end{lem}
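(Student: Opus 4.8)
The plan is to treat the two closure properties separately, since the first is immediate and the second carries all the content; throughout write ``f.g.'' for finitely generated. For closure under subgroups, suppose $G$ is Howson and $S\leq G$. Given f.g. subgroups $H,K\leq S$, they are in particular f.g. subgroups of $G$, so $H\cap K$ is f.g. because $G$ is Howson; since $H\cap K\leq S$ as well, this exhibits the intersection of an arbitrary pair of f.g. subgroups of $S$ as f.g., so $S$ is Howson. No further work is needed here.

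For closure under finite extension, suppose $F\leq G$ with $m:=[G:F]<\infty$ and $F$ Howson; I want to conclude that $G$ is Howson. Fix f.g. subgroups $H,K\leq G$. The key observation is that intersecting with $F$ simultaneously lands us inside the Howson group $F$ and preserves finiteness of index: the assignment $h(H\cap F)\mapsto hF$ embeds the coset space of $H\cap F$ in $H$ into that of $F$ in $G$, so $[H:H\cap F]\leq m$, and likewise $[K:K\cap F]\leq m$ and $[H\cap K:(H\cap K)\cap F]\leq m$. I would then argue in three steps. First, $H\cap F$ and $K\cap F$ are finite-index subgroups of the f.g. groups $H$ and $K$, hence are themselves f.g. (a finite-index subgroup of a f.g. group is f.g., by Reidemeister--Schreier; the Schreier formula stated above is exactly the free-group instance). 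Second, $H\cap F$ and $K\cap F$ are now f.g. subgroups of $F$, so the Howson property of $F$ makes $(H\cap F)\cap(K\cap F)=H\cap K\cap F$ f.g. Third, $H\cap K\cap F$ is a f.g. subgroup of finite index in $H\cap K$, and any group possessing a f.g. finite-index subgroup is itself f.g. (adjoin finitely many coset representatives to a generating set of the subgroup); hence $H\cap K$ is f.g., as required. Note that no normality of $F$ in $G$ is used anywhere.

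The argument is essentially formal, so I do not expect a serious obstacle; the only ingredients beyond bookkeeping with indices are the two standard facts that finite generation passes both down to, and up from, finite-index subgroups. The one point deserving care is the verification of the index bounds $[H:H\cap F]\leq m$ and its analogues via the coset embedding into $G/F$, since it is precisely these bounds that keep the reduction to $F$ within the finitely generated world at both ends: without them neither the input to the Howson property of $F$ nor the upgrade of $H\cap K\cap F$ back to $H\cap K$ would go through.
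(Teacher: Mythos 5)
Your proof is correct and follows essentially the same route as the paper's: intersect $H$ and $K$ with the finite-index Howson subgroup $F$, use that finite generation passes down to and up from finite-index subgroups, and apply the Howson property of $F$ to $H\cap F$ and $K\cap F$. The only cosmetic difference is that the paper phrases the finite extension via a short exact sequence with $F$ normal, whereas you correctly observe that normality is never needed since the coset embedding already gives the index bounds.
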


The proof of the above lemma can be an elementary exercise. Moreover, we have:

\begin{lem}
The group $F_n\times \Z (n\geq 2)$ is not a Howson group.
\end{lem}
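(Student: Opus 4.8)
The plan is to exploit the subgroup-closure of the Howson property (Lemma~\ref{finite extension lem}): since $F_2\times\Z$ embeds in $F_n\times\Z$ for every $n\geq 2$ via the inclusion $F_2<F_n$, it suffices to produce two finitely generated subgroups of $F_2\times\Z$ whose intersection fails to be finitely generated, and then the contrapositive of subgroup-closure forces $F_n\times\Z$ to be non-Howson as well. So I would reduce immediately to the case $n=2$ and write $F_2=\langle a,b\rangle$ with $\Z=\langle t\rangle$.

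The candidate subgroups are $H=F_2\times\{0\}=\langle a,b\rangle$ and $K=\langle (a,1),(b,1)\rangle$, each of rank $2$ and hence finitely generated. The key observation is that $K$ is the graph of the homomorphism $\phi\colon F_2\to\Z$ determined by $a,b\mapsto 1$; that is, $K=\{(w,\phi(w)):w\in F_2\}$, which holds because the two generators $(a,1),(b,1)$ have exactly this form and $w\mapsto(w,\phi(w))$ is a homomorphism whose image is therefore generated by them. Intersecting with $H$, which consists of the elements with trivial $\Z$-coordinate, I would then identify $H\cap K=\{(w,0):w\in\ker\phi\}$, so that $H\cap K\cong\ker\phi$.

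It remains to verify that $\ker\phi$ is not finitely generated, and this is the only substantive point. Since $\phi$ is surjective, $\ker\phi$ has infinite index in $F_2$; choosing the Schreier transversal $\{a^m:m\in\Z\}$ for $\ker\phi$ (each coset being detected by the value of $\phi$), the resulting Schreier generators are $a^m b a^{-(m+1)}$ for $m\in\Z$, while the $a$-generators collapse to the identity. These infinitely many elements form a free basis, so $\ker\phi$ is free of countably infinite rank and in particular not finitely generated. Equivalently, one may view $\ker\phi$ as the fundamental group of the $\Z$-cover of the wedge of two circles associated with $\phi$, an infinite graph of infinite first Betti number.

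Combining these steps, $H$ and $K$ are finitely generated while $H\cap K$ is not, so $F_2\times\Z$ is not a Howson group; by the reduction above, neither is $F_n\times\Z$ for any $n\geq 2$. I expect the graph-of-the-homomorphism description of $K$ and the Schreier-generator computation for $\ker\phi$ to be the crux of the argument—both routine, but together they furnish the explicit failure of finite generation.
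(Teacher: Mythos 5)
Your proposal is correct and follows essentially the same route as the paper: both take $H=F_2$ and $K$ the graph of a surjection $\phi\colon F_2\twoheadrightarrow\Z$ (the paper uses $a\mapsto 0$, $b\mapsto 1$ and $K=\langle a,bt\rangle$; you use $a,b\mapsto 1$), identify $H\cap K$ with $\ker\phi$, and observe that this kernel is free of infinite rank. Your Schreier-transversal computation just makes explicit the infinite generation that the paper states via the generating set $\{b^{-i}ab^{i}\}$.
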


\begin{proof}
Let $G=\langle a, b\rangle \times \langle t\rangle\cong F_2\times \Z$, and let $H=F_2=\langle a, b\rangle$, $K=\langle a, bt\rangle$ be two subgroups of $G$. Now we consider the epimorphism $$\phi: F_2\twoheadrightarrow \Z, \quad w \mapsto |w|_b,$$
where $|w|_b$ is the total $b$-exponent of $w$. Note that an element $w\in H\cap K$ if and only if $|w|_b=0$, Hence we have
$$H\cap K=\ker\phi=\langle b^{-i}ab^i\mid i\in \Z \rangle,$$ which is not finitely generated. So $F_2\times \Z$ and hence $F_n\times \Z (n\geq 2)$ is not a Howson group.
\end{proof}

In fact, if we pick $G_m=\langle a, b\rangle \times \langle t\mid t^m=1 \rangle\cong F_2\times \Z/m\Z$, and
$$H=F_2=\langle a, b\rangle<G_m, \quad K=\langle a, bt\rangle<G_m,$$
$$\phi: F_2\twoheadrightarrow \Z/m\Z, \quad w\mapsto |w|_b,$$ as above. Then by the same arguments, we have $H\cap K=\ker\phi$ which is a subgroup of $F_2$ with index $m$. Therefore, by Schreier's formula,
$$\rk(H\cap K)=m+1=m(\rk(H)-1)(\rk(K)-1)+1.$$

More generally, inspired by the above arguments and Klyachko and Ponfilenko's paper \cite{KP20}, we have:

\begin{prop}\label{ununiform bound}
Let $G=F_n\times (\bigoplus_{m=2}^{\infty}\Z/m\Z) (n>1)$ or $G=F_n\times \Q/\Z (n>1)$. Then, for any $h,k,\ell >1$, the group $G$ contains two finitely generated subgroups $H,K$ with
$$\rk(H)=h, \quad \rk(K)=k,$$
$$\rk(H\cap K)-1=\ell(\rk(H)-1)(\rk(K)-1)=\ell(h-1)(k-1).$$
Therefore, we have $\xi_G(h,k)=\infty$ for all $h,k>1$, and hence $G$ is not strongly Howson.
\end{prop}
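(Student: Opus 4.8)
The plan is to exploit the torsion direct factor to ``twist'' one of the two subgroups into the graph of a homomorphism, so that the intersection collapses onto the kernel of that homomorphism and hence becomes a subgroup of large index (and, by Schreier's formula, of large rank) inside a free group. Write $G=F_n\times T$, where $T$ denotes either $\bigoplus_{m=2}^{\infty}\Z/m\Z$ or $\Q/\Z$; the only property of $T$ we shall use is that it contains a cyclic subgroup of every finite order $d\geq 2$, which is clear in both cases (the $\Z/d\Z$ summand, respectively the subgroup $\frac1d\Z/\Z$). Let $\pi:G\to F_n$ be the projection. For a subgroup $S<G$ meeting $\{1\}\times T$ trivially, $\pi$ restricts to an isomorphism onto $\pi(S)\leq F_n$, so $S$ is free and $\rk(S)=\rk(\pi(S))$; I will build $H,K$ of this graph type.

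Concretely, first choose a free subgroup $\bar H<F_n$ of rank $h$ (possible since $n\geq 2$), and inside it a free subgroup $\bar K<\bar H$ of rank $k$ (possible since the free group $\bar H$ of rank $h\geq 2$ contains free subgroups of every finite rank). Set $d:=\ell(h-1)$ and fix an embedding $\Z/d\Z\hookrightarrow T$, which exists since $d\geq 2$. Let $\phi:\bar K\to\Z/d\Z\leq T$ be a surjection, say sending one free generator of $\bar K$ to a generator of $\Z/d\Z$ and the remaining generators to $0$. Now put
$$H:=\bar H\times\{0\},\qquad K:=\{(w,\phi(w))\mid w\in\bar K\}.$$
Both are finitely generated: $H\cong\bar H$ has rank $h$, and $K$, being the graph of $\phi$, is carried isomorphically onto $\bar K$ by $\pi$, so $\rk(K)=k$.

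It remains to compute the intersection and apply Schreier's formula. A pair $(w,s)$ lies in $H\cap K$ if and only if $s=0$, $w\in\bar K$ and $s=\phi(w)$, that is, if and only if $w\in\ker\phi$ and $s=0$; hence $H\cap K=\ker\phi\times\{0\}\cong\ker\phi$. Since $\ker\phi$ is a subgroup of the free group $\bar K$ of rank $k$ with index $[\bar K:\ker\phi]=|\im\phi|=d$, Schreier's formula gives
$$\rk(H\cap K)=\rk(\ker\phi)=d(k-1)+1=\ell(h-1)(k-1)+1,$$
so that $\rk(H\cap K)-1=\ell(\rk(H)-1)(\rk(K)-1)$, as required. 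Finally, letting $\ell\to\infty$ with $h,k$ fixed produces subgroups of rank $\leq h$ and $\leq k$ whose intersection has arbitrarily large rank, whence $\xi_G(h,k)=\infty$ and $G$ is not strongly Howson.

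I expect the only genuinely delicate point to be the intersection computation together with the bookkeeping that lets $h$, $k$ and the amplification factor $\ell$ be prescribed independently: rather than trying to realize the Hanna Neumann bound with equality for generic $h,k$ (which would force awkward coprimality conditions on the relevant indices), I absorb the entire factor $\ell(h-1)$ into the index $d$ of $\ker\phi$ in $\bar K$, which is precisely why nesting $\bar K<\bar H$ and twisting only $\bar K$ by $\phi$ is convenient.
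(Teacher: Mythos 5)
Your proof is correct, and it shares the paper's central device --- realizing $K$ as the graph of a homomorphism from a free subgroup onto a finite cyclic subgroup of the torsion factor, so that $H\cap K$ collapses onto the kernel, a finite-index subgroup of a free group whose rank Schreier's formula then computes --- but the decomposition is genuinely different. The paper works entirely inside $F_2\times\Z/\ell\Z$: it takes $H$ and $K_0$ to be the preimages of $(h-1)\Z$ and $(k-1)\Z$ under the two coordinates of the abelianization $F_2\to\Z\oplus\Z$, so that $H$, $K_0$ and $H\cap K$ are all of finite index in $F_2$ (indices $h-1$, $k-1$ and $\ell(h-1)(k-1)$ respectively), all three ranks come from Schreier's formula, and only the factor $\ell$ is carried by the cyclic group. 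You instead nest $\bar K<\bar H$ of the prescribed ranks (so $\rk(H)=h$ and $\rk(K)=k$ are free of charge and $\bar H\cap\bar K$ is just $\bar K$) and absorb the entire factor $d=\ell(h-1)$ into the order of the cyclic subgroup of $T$, so that a single application of Schreier's formula inside $\bar K$ finishes the computation. Your version avoids verifying the index of $\phi^{-1}((h-1)\Z\oplus\ell(k-1)\Z)$ and makes the independence of $h$, $k$ and $\ell$ completely transparent; the paper's version has the mild advantage that both $H$ and $K$ project to finite-index subgroups of $F_2$, exhibiting the intersection as a genuine ``Hanna Neumann bound times $\ell$'' configuration. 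Both arguments conclude $\xi_G(h,k)=\infty$ in the same way.
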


\begin{proof}
Note that in either case, the group $G$ always contains subgroups  $G_\ell = F_2 \times \Z/\ell\Z$ for all $\ell>1$. Let $$\phi: F_2\to \Z\oplus \Z, \quad x \mapsto (\phi_1(x),\phi_2(x))$$ be the abelianization of $F_2$. We take the subgroups
$$H=\phi_1^{-1}((h-1)\Z), \ \ K_0=\phi_2^{-1}((k-1)\Z) \subset F_2,$$
$$K=\{(x, \pi(\phi_2(x)/(k-1))\mid x\in K_0\} \subset G_\ell,$$
where $\pi: \Z \to \Z/\ell\Z$ is the canonical projection. Then
$$F_2 \cap K= \phi_2^{-1}(\ell(k-1)\Z) \subset K_0,$$
$$H \cap K = H\cap (F_2\cap K)=\phi^{-1}((h-1)\Z \oplus \ell(k-1)\Z)\subset F_2.$$
Note that $H, K_0, F_2 \cap K$ and $H \cap K$ are subgroups of $F_2$ with finite index $$[F_2 : H]=h-1, \quad [F_2 : K_0]=k-1, \quad [F_2 : H\cap K]=\ell(h-1)(k-1).$$ Then by Schreier's formula, the ranks of $H, K_0, H \cap K$ are
$\rk(H)=h, \rk(K_0)=k,$ and
$$\rk(H\cap K)=\ell(h-1)(k-1)+1.$$
Moreover, note that the projection $p: K \to K_0$ is an isomorphism, we have $\rk(K)=\rk(K_0)=k$ and hence the formula
$$ \rk(H\cap K)-1=\ell(h-1)(k-1)=\ell(\rk(H)-1)(\rk(K)-1)$$
holds.
\end{proof}

\begin{prop}\label{G is Howson}
For any $n\geq 0$, all the groups $F_n\times (\bigoplus_{m=2}^{\infty}\Z/m\Z)$ and $F_n\times \Q/\Z$ are Howson groups.
\end{prop}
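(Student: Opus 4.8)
The plan is to reduce everything to the fact, already available to us, that a finite extension of a free group is a Howson group (Lemma \ref{finite extension lem} combined with Howson's original theorem that free groups are Howson). Write $A$ for either of the abelian groups $\bigoplus_{m=2}^{\infty}\Z/m\Z$ or $\Q/\Z$, so that $G=F_n\times A$. The decisive feature of $A$ that I would exploit is that it is \emph{locally finite}, i.e. every finitely generated subgroup of $A$ is finite: for $\Q/\Z$ this holds because a finitely generated torsion abelian group is finite, and for $\bigoplus_{m=2}^{\infty}\Z/m\Z$ it holds because finitely many elements have support contained in a finite sub-sum $\bigoplus_{m=2}^{N}\Z/m\Z$, which is finite.

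The next step is to transfer this local finiteness from $A$ to $G$. Let $p_A\colon G\to A$ be the projection onto the second factor. If $H=\langle g_1,\dots,g_s\rangle$ is a finitely generated subgroup of $G$, then $p_A(H)=\langle p_A(g_1),\dots,p_A(g_s)\rangle$ is a finitely generated, hence finite, subgroup of $A$. Setting $A_H:=p_A(H)$, each generator $g_j$ lies in $F_n\times A_H$, so $H<F_n\times A_H$ with $A_H$ finite. Thus every finitely generated subgroup of $G$ is contained in a subgroup of the shape $F_n\times A'$ for some finite subgroup $A'<A$.

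With these two observations the argument is short. Given finitely generated subgroups $H,K<G$, put $A':=A_H+A_K$, a finite subgroup of $A$, and set $G':=F_n\times A'$; then $H,K<G'$. Now $G'$ is a finite extension of the free group $F_n$ (of index $|A'|$), and $F_n$ is a Howson group, so by Lemma \ref{finite extension lem} the group $G'$ is Howson. Hence $H\cap K$ is finitely generated as a subgroup of $G'$; since $H\cap K$ is the same set whether the intersection is formed in $G'$ or in $G$, it is finitely generated in $G$. This shows $G$ is Howson, and the case $n=0$ is covered as well, since $F_0$ is the trivial (free) group.

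The only substantive ingredient is the local finiteness of $A$ and its promotion to $G$; everything afterward is a formal reduction to the finite-extension case, so I expect no real obstacle. The one point I would state carefully is that the finite subgroup $A'$ absorbing $H$ and $K$ depends on $H$ and $K$ and cannot be bounded a priori — which is precisely why this argument delivers the Howson property yet gives no uniform rank bound, exactly as Proposition \ref{ununiform bound} requires.
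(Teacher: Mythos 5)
Your proof is correct and follows essentially the same route as the paper: both arguments place the finitely many generators of $H$ and $K$ inside a subgroup $F_n\times A'$ with $A'$ finite (the paper takes $A'=\bigoplus_{m=2}^r\Z/m\Z$ or $\langle 1/2,\dots,1/r\rangle$ for large $r$; you take $A'=A_H+A_K$ via local finiteness) and then invoke Lemma \ref{finite extension lem}. Your explicit use of local finiteness is in fact the generalization the paper itself records later as Theorem \ref{locally finite extension is Howson}, so nothing further is needed.
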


\begin{proof}
(1) Let $G=F_n\times (\bigoplus_{m=2}^{\infty}\Z/m\Z)$. For any given finitely generated subgroups $H, K < G$, suppose $\{x_1,\ldots, x_h\}$ and $\{y_1,\ldots, y_k\}$ are finite sets of generators of $H$ and $K$ respectively. Then there is a sufficiently large $r$ such that
$$\{x_1,\ldots, x_h, y_1,\ldots, y_k\}\subset F_n \times (\bigoplus_{m=2}^r \Z/m\Z)\subset G.$$
Therefore, $H$ and $K$ are contained in the subgroup $F_n \times (\bigoplus_{m=2}^r \Z/m\Z)$,
which is a finite extension of $F_n$ since $\bigoplus_{m=2}^r \Z/m\Z$ is finite.
Moreover, since the free group $F_n$ is a Howson group, by Lemma \ref{finite extension lem}, $F_n \times (\bigoplus_{m=2}^r \Z/m\Z)$ is a Howson group. Then the intersection $H \cap K$ is finitely generated, which means $G$ is also a Howson group.\\

(2) Let $G=F_n\times \Q/\Z$. Note that $\Q/\Z$ can be generated by $\{ 1/m \mid  2\leq m\in \N\}$. For any given finitely generated subgroups $H, K < G$, suppose $\{x_1,\ldots, x_h\}$ and $\{y_1,\ldots, y_k\}$ are finite sets of generators of $H$ and $K$ respectively, then there is a sufficiently large $r\in\N$ such that
$$\{x_1,\ldots, x_h, y_1,\ldots, y_k\}\subset F_n \times G_r\subset F_n\times \Q/\Z,$$
where $G_r=\langle 1/2, 1/3,\ldots, 1/r\rangle$ is a finite cyclic subgroup of $\Q/\Z$. It implies
$$H, ~K<F_n \times G_r.$$
Then by the same arguments as in the above case (1), we can obtain that $G=F_n\times \Q/\Z$ is also a Howson group.
\end{proof}

Finally, combining Proposition \ref{ununiform bound} and Proposition \ref{G is Howson}, we obtain Theorem \ref{main thm}.

\section{Questions and Discussions}

Recall that the semidirect product $F_2\rtimes \Z$ (in particular,  $F_2\times \Z$) is not a Howson group and hence not strongly Howson. Then, by using the same arguments as in the proof of Proposition \ref{ununiform bound}, we can obtain:

\begin{thm}\label{sufficinet condition of not Strongly Howson}
A group $G$ is not strongly Howson if it contains a subgroup isomorphic to $F_2\rtimes \Z$, or contains subgroups isomorphic to $F_2\times \Z/\ell \Z$ for infinitely many $\ell\in\N$.
\end{thm}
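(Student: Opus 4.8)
The plan is to reduce both hypotheses to a single principle: the quantity $\xi_G$ is monotone under passage to subgroups. First I would record the elementary observation that if $L<G$ then $\xi_L(h,k)\le\xi_G(h,k)$ for all $h,k$. Indeed, any pair of subgroups $H,K<L$ is automatically a pair of subgroups of $G$, the rank of each is an intrinsic invariant independent of the ambient group, and the set $H\cap K$ is the same whether computed in $L$ or in $G$; hence the supremum defining $\xi_L(h,k)$ is taken over a subfamily of that defining $\xi_G(h,k)$. Consequently, if some subgroup $L<G$ fails to be strongly Howson, i.e.\ $\xi_L(h,k)=\infty$ for some $h,k$, then $\xi_G(h,k)=\infty$ as well, so $G$ is not strongly Howson. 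In particular, since $\xi_G(h,k)\ge\rk(H\cap K)$ whenever $\rk(H)\le h$ and $\rk(K)\le k$, any group that is not Howson (admitting finitely generated $H,K$ with $H\cap K$ not finitely generated) automatically fails to be strongly Howson.

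For the first hypothesis, suppose $F_2\rtimes\Z<G$. As recalled above, $F_2\rtimes\Z$ is not a Howson group, so it contains finitely generated subgroups $H,K$ with $\rk(H\cap K)=\infty$; taking $h=\rk(H)$ and $k=\rk(K)$ gives $\xi_{F_2\rtimes\Z}(h,k)=\infty$. The monotonicity above then forces $\xi_G(h,k)=\infty$, so $G$ is not strongly Howson.

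For the second hypothesis, suppose $G_\ell:=F_2\times\Z/\ell\Z<G$ for infinitely many $\ell\in\N$. Here I would re-run the construction in the proof of Proposition \ref{ununiform bound} with $h=k=2$ held fixed: inside each such $G_\ell$ it produces subgroups $H_\ell,K_\ell$ of rank $2$ whose intersection has rank $\ell(2-1)(2-1)+1=\ell+1$. Hence $\xi_G(2,2)\ge\rk(H_\ell\cap K_\ell)=\ell+1$ for infinitely many $\ell$, and since an infinite set of positive integers is unbounded, this yields $\xi_G(2,2)=\infty$, so $G$ is again not strongly Howson.

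The content of the argument is carried entirely by Proposition \ref{ununiform bound} together with the known failure of the Howson property for $F_2\rtimes\Z$; the only point that requires a little care is the uniformity in the second case, namely keeping the ranks of $H_\ell$ and $K_\ell$ fixed at $2$ while letting the intersection rank grow with $\ell$, so that the single pair of arguments $(h,k)=(2,2)$ already witnesses $\xi_G=\infty$. This is precisely what the Schreier-formula computation in Proposition \ref{ununiform bound} delivers, so I do not anticipate a genuine obstacle.
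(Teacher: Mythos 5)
Your proposal is correct and follows essentially the same route as the paper, which likewise reduces the first case to the failure of the Howson property for $F_2\rtimes\Z$ and the second case to re-running the construction of Proposition \ref{ununiform bound} inside the subgroups $F_2\times\Z/\ell\Z$. The only difference is that you make explicit the monotonicity $\xi_L(h,k)\le\xi_G(h,k)$ for $L<G$ and the specialization to $(h,k)=(2,2)$, both of which the paper leaves implicit; these are correct and harmless additions.
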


Moreover, inspired by Proposition \ref{G is Howson}, we can obtain a little more general result.

A group is \emph{locally finite} if every finitely generated subgroup is finite. For example, every finite group, $\bigoplus_{m=2}^{\infty}\Z/m\Z$, $\Q/\Z$ and $S_\omega$, the group of all
permutations of $\N$ which move only finitely many symbols, are all locally finite.
Let $G$ be a \emph{locally finite extension} of a Howson group $H$, that is, there is a short exact sequence as follows:
$$1\to H \xlongrightarrow{i} G\xlongrightarrow{\pi} Q\to 1,$$
where $Q$ is a locally finite group. Then, for any two finitely generated subgroups $A, B$ of $G$, let $\langle A, B\rangle \subset G$ be the subgroup generated by $A$ and $B$. Then $\pi\langle A, B\rangle$ is a finitely generated subgroup of the locally finite group $Q$, and hence, $\pi\langle A, B\rangle$ is finite. Now, let us consider the extension $G'$ of the Howson group $H$ by the finite subgroup $\pi\langle A, B\rangle$ of $Q$,
$$1\to H \xlongrightarrow{i} G'\xlongrightarrow{\pi} \pi\langle A, B\rangle\to 1.$$
Then $G'$ is again Howson by Lemma \ref{finite extension lem}, and hence $A\cap B$ is finitely generated because $A, B\subset \langle A, B\rangle \subset G'$. Therefore, $G$ is also a Howson group. Namely, we have obtained:

\begin{thm}\label{locally finite extension is Howson}
The class of Howson groups is closed under locally finite extension.
\end{thm}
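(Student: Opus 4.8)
The plan is to reduce the locally finite case to the finite-extension case already settled in Lemma \ref{finite extension lem}. Fix a short exact sequence $1\to H\to G\xrightarrow{\pi} Q\to 1$ with $H$ a Howson group and $Q$ locally finite, and let $A,B<G$ be two arbitrary finitely generated subgroups; the goal is to show that $A\cap K$, that is $A\cap B$, is again finitely generated, since that is precisely the Howson property for $G$.

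First I would pass from the pair $A,B$ to the single subgroup $\langle A,B\rangle<G$ they generate, which is finitely generated because $A$ and $B$ are. Its image $\pi\langle A,B\rangle<Q$ is then finitely generated, and hence \emph{finite} by local finiteness of $Q$. This is the one place where the hypothesis is genuinely used: if $Q$ were merely an arbitrary group, this image could be infinite and the reduction below would collapse.

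Next I would consider the preimage $G'=\pi^{-1}\bigl(\pi\langle A,B\rangle\bigr)<G$, which fits into a short exact sequence $1\to H\to G'\to \pi\langle A,B\rangle\to 1$ with finite quotient. Thus $G'$ is a finite extension of the Howson group $H$, so $G'$ is itself Howson by Lemma \ref{finite extension lem}. Since $A,B\subset\langle A,B\rangle\subset G'$, both $A$ and $B$ are finitely generated subgroups of the Howson group $G'$, and therefore $A\cap B$ is finitely generated. As $A,B$ were arbitrary, this shows $G$ is a Howson group.

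Because every step is a direct reduction, I do not anticipate a real obstacle. The only point that requires care is to capture $A$ and $B$ simultaneously inside a \emph{single} finite extension of $H$; this is exactly why I work with the subgroup $\langle A,B\rangle$ and take the preimage of its (finite) image, rather than trying to treat $A$ and $B$ in separate finite extensions, which would not immediately give a common Howson overgroup containing both.
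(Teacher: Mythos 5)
Your proof is correct and follows essentially the same route as the paper: pass to $\langle A,B\rangle$, observe its image in the locally finite quotient is finite, and apply the finite-extension lemma to the preimage $G'$ containing both $A$ and $B$. Aside from the harmless slip writing ``$A\cap K$'' for $A\cap B$ at the start, there is nothing to change.
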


Now, by combining the above two theorems, we obtain an approach to construct Howson but not strongly Howson groups as follows.

\begin{cor}
Let $H$ be a Howson group containing a subgroup isomorphic to $F_2$, and let $Q$ be a locally finite group containing subgroups isomorphic to $\Z/\ell \Z$ for infinitely many $\ell$. Then the directly product $H\times Q$ is a Howson group but not strongly Howson.
\end{cor}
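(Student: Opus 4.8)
The plan is to verify that $H \times Q$ satisfies both hypotheses needed to apply the results already established, namely Theorem \ref{locally finite extension is Howson} to get the Howson property and Theorem \ref{sufficinet condition of not Strongly Howson} to defeat the strongly Howson property. The key observation is that the two halves of the corollary correspond precisely to the two named theorems, so the entire proof reduces to recognizing $H \times Q$ as an instance of each.

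First I would establish that $H \times Q$ is Howson. Here I view the direct product as a locally finite extension of $H$: there is a short exact sequence
$$1 \to H \to H\times Q \xrightarrow{\pi} Q \to 1,$$
where $\pi$ is projection onto the second factor and the kernel is $H \times \{1\} \cong H$. Since $H$ is Howson by hypothesis and $Q$ is locally finite by hypothesis, Theorem \ref{locally finite extension is Howson} applies directly and gives that $H \times Q$ is a Howson group. (Strictly speaking, one should note that a direct product is a particularly simple extension, but no splitting or normality subtlety arises because the sequence already splits.)

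Next I would show $H \times Q$ is not strongly Howson by exhibiting the subgroups demanded by Theorem \ref{sufficinet condition of not Strongly Howson}. Since $H$ contains a subgroup isomorphic to $F_2$ and $Q$ contains subgroups isomorphic to $\Z/\ell\Z$ for infinitely many $\ell$, the product $H \times Q$ contains $F_2 \times \Z/\ell\Z$ for each such $\ell$; concretely, if $F_2 < H$ and $\Z/\ell\Z < Q$, then $F_2 \times \Z/\ell\Z$ sits inside $H \times Q$ as a direct product of the two embedded factors. Thus $H \times Q$ contains subgroups isomorphic to $F_2 \times \Z/\ell\Z$ for infinitely many $\ell$, which is exactly the second sufficient condition in Theorem \ref{sufficinet condition of not Strongly Howson}, and therefore $H \times Q$ is not strongly Howson.

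The proof is essentially a bookkeeping exercise, and I do not expect a genuine obstacle: the content lives entirely in the two theorems being invoked. The only point requiring minor care is the verification that the embeddings of the $F_2$ subgroup of $H$ and the $\Z/\ell\Z$ subgroups of $Q$ combine to an honest copy of $F_2 \times \Z/\ell\Z$ inside the product, but this is immediate from the universal property of direct products since the two factors commute elementwise and intersect trivially.
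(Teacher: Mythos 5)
Your proposal is correct and matches the paper exactly: the paper derives this corollary by combining Theorem \ref{locally finite extension is Howson} (applied to the split extension $1\to H\to H\times Q\to Q\to 1$) with Theorem \ref{sufficinet condition of not Strongly Howson} (via the subgroups $F_2\times \Z/\ell\Z$ for infinitely many $\ell$). No further comment is needed.
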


For instance, for any $n\geq 2$, the groups $F_n\times (\bigoplus_{m=2}^{\infty}\Z/m\Z), F_n\times \Q/\Z$ and $F_n\times S_\omega$ are Howson but not strongly Howson. Note that these groups are not finitely generated, so we wonder:

\begin{ques}\label{Question f.g}
Is there a finitely generated or finitely presented Howson group which is not strongly Howson?
\end{ques}

To construct a positive example for Question \ref{Question f.g}, one possible approach is, first to pick a finitely generated group $B$ which does not contain $\Z$ but contains subgroups isomorphic to $\Z/\ell \Z$ for infinitely many $\ell\in\N$. (Note that such a group does exist by Golod's Theorem which answered the famous Burnside's Problem: \emph{There exists a finitely generated infinite group $B$ such that every element of $B$ has finite order}. In fact, the orders of all elements in $B$ can be not uniformly bounded, see \cite{Ale72, Gri80, Osi24}.) Then, we consider the group $G=F_n\times B$, which is finitely generated and is not strongly Howson. However, it is difficult to guarantee $G$ to be a Howson group.

Another possible approach is to embed the group  $G=F_n\times (\bigoplus_{m=2}^{\infty}\Z/m\Z)$ or $G=F_n\times \Q/\Z$ into a finitely generated Howson group $G^*$.
Since $G$ is countable, we may use the following Higman-Neumann-Neumann's Embedding Theorem \cite{HNN49} to get such a $G^*$:

\begin{thm}[Higman-Neumann-Neumann's Embedding Theorem]\label{HNN embedding}
Every countable group $G$ can be embedded in a 2-generator group $G^*$. More precisely, suppose $G=\langle c_1, c_2, \ldots\mid r_1, r_2, \ldots\rangle$, then $G^*$ can be an HNN extension of the free product $G\ast F_2$ and has a presentation
$$G^*=\langle G, a, b, t\mid t^{-1}at=b, ~t^{-1}b^{-1}abt=c_1a^{-1}ba, ~~t^{-1}b^{-2}ab^2t=c_2a^{-2}ba^2, \ldots\rangle.$$
Clearly, $G^*$ can be generated by the two generators $a$ and $t$, and $G^*$ is finitely presented if $G$ is finitely presented.
\end{thm}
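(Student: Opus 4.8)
The plan is to realize $G^*$ as an HNN extension of the free product $P = G \ast F_2$ with stable letter $t$, and then invoke the fundamental theorem of HNN extensions (Britton's Lemma) to conclude that the base group $P$—and in particular its free factor $G$—embeds in $G^*$. Writing $F_2 = \langle a, b\rangle$, the defining relations $t^{-1}at = b$ and $t^{-1}(b^{-i}ab^i)t = c_i a^{-i}ba^i$ (for $i \geq 1$) say precisely that $t$ conjugates the subgroup
$$A = \langle a,\ b^{-1}ab,\ b^{-2}ab^2,\ \ldots\rangle < P$$
onto the subgroup
$$B = \langle b,\ c_1a^{-1}ba,\ c_2a^{-2}ba^2,\ \ldots\rangle < P$$
according to a map $\phi$ sending the listed generators of $A$ to the listed generators of $B$ in order. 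So it suffices to check three things: that $A$ is free on its listed generators, that $B$ is free on its listed generators, and that $\phi$ is a well-defined isomorphism $A \to B$.

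For the first point I would use the standard fact that in $F_2 = \langle a, b\rangle$ the elements $\{b^{-i}ab^i : i \geq 0\}$ freely generate a free subgroup: they form a subset of the free basis $\{b^{-i}ab^i : i \in \Z\}$ of the kernel of the homomorphism $F_2 \to \Z,\ a \mapsto 0,\ b \mapsto 1$ (the normal closure of $a$), and any subset of a free basis is again a free basis of the subgroup it generates. Since $F_2$ is a free factor of $P$, these elements remain free generators in $P$, so $A$ is free of countably infinite rank.

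For $B$, whose generators involve the letters $c_i \in G$, I would exploit the retraction $\rho: P = G \ast F_2 \twoheadrightarrow F_2$ that kills $G$. Under $\rho$ we have $b \mapsto b$ and $c_i a^{-i}ba^i \mapsto a^{-i}ba^i$, and by the symmetric version of the fact above (swapping the roles of $a$ and $b$) the images $\{b\} \cup \{a^{-i}ba^i : i \geq 1\}$ freely generate a free subgroup of $F_2$. Thus the homomorphism from a free group $F_\infty$ onto $B$ sending free generators to the listed generators of $B$ becomes injective after composing with $\rho$, and hence is itself injective; this shows $B$ is free on its listed generators. Now $\phi$ carries a free basis of $A$ bijectively onto a free basis of $B$, so it extends to an isomorphism, and the HNN extension $P\ast_\phi = \langle P, t \mid t^{-1}ut = \phi(u),\ u \in A\rangle$ is exactly the group $G^*$; by Britton's Lemma $P$ embeds, and therefore so does $G$.

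It remains to read off the two stated consequences. From $t^{-1}at = b$ we get $b = t^{-1}at \in \langle a, t\rangle$, and then each relation $t^{-1}b^{-i}ab^i t = c_i a^{-i}ba^i$ can be solved for $c_i$ as a word in $a, b, t$, so every generator of $G$ lies in $\langle a, t\rangle$ and hence $G^* = \langle a, t\rangle$ is $2$-generated. If $G$ is finitely presented, only finitely many generators $c_i$ and relators occur, so the displayed presentation of $G^*$ is finite after eliminating $b$ and the superfluous $c_i$. I expect the main obstacle to be the verification that $B$ is free on its listed generators: these generators mix the two free-product factors, so one must argue carefully with the normal form in $G \ast F_2$—here the retraction $\rho$ is the device that reduces this to the clean free-group fact, and getting that reduction right is the crux of the argument.
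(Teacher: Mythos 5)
The paper does not prove this statement at all: it is quoted as a classical theorem and attributed to Higman, Neumann and Neumann \cite{HNN49}, so there is no in-paper argument to compare yours against. Judged on its own, your proof is correct and is essentially the standard proof of the embedding theorem. The two substantive points are exactly the ones you isolate: that $A=\langle b^{-i}ab^{i}: i\geq 0\rangle$ is free on the listed generators (a subset of the free basis $\{b^{-i}ab^{i}: i\in\Z\}$ of the normal closure of $a$ in $F_2$), and that $B=\langle b,\ c_ia^{-i}ba^{i}: i\geq 1\rangle$ is free on its listed generators. Your device for the second point --- composing with the retraction $\rho: G\ast F_2\twoheadrightarrow F_2$ that kills $G$ and observing that a homomorphism is injective whenever some composite with it is injective --- is a clean substitute for the usual normal-form/length argument in the free product, and it is valid. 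The deduction that $\phi$ extends to an isomorphism $A\to B$, the appeal to Britton's Lemma for the embedding of the base group, the elimination of $b$ and the $c_i$ to get $G^*=\langle a,t\rangle$, and the finiteness of the presentation when $G$ is finitely presented are all correct. No gaps.
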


It is easy to see that the 2-generator group $G^*$ is not strongly Howson. But, it seems difficult to guarantee $G^*$ to be a Howson group?

\vspace{6pt}

\noindent\textbf{Acknowledgements.} The first author would like to thank Shengkui Ye for some helpful communications.


\end{document}